  \newtheorem{theorem}{Theorem}
  \newtheorem{lemma}[theorem]{Lemma}
  \newtheorem{corollary}[theorem]{Corollary}
\begin{document}

\title[ ]{Upper bounds for the number of primitive ray class characters with conductor below a given bound}

\author{Joshua Zelinsky}
  \date{}
  \email{jzelinsk@bu.edu}
  \address{Department of Mathematics and Statistics, Boston University, 111 Cummington Mall, Boston, MA 02215}

  \thanks{David Rohrlich looked at multiple drafts and offered numerous helpful comments.}
  \maketitle
{\bf Abstract.}
We present upper bounds on certain sums  which are related to Artin's primitive root conjecture and are also used in counting ray class characters.

\section{Introduction}

Fix a positive integer $a$, and  write $\mathrm{ord}_n(a)$  for the order of $a$ in the multiplicative group of invertible residue classes modulo $n$ when $(a,n)=1$. Let $$G(x)= \sum_{n \leq x, (n,a)=1} \frac{\phi(n)}{\mathrm{ord}_n(a)}.$$ We show that for any $\alpha <3$, we have $G(x) = O(x^2/ \log^\alpha x)$.  The motivation for investigating these sums stems from two distinct problems: the Artin primitive root conjecture and the problem of counting Artin representations.

Artin conjectured that for any given a rational integer $a$, $a \neq 1$ and $a$ not a perfect square,  the set rational primes $p$ such that $a$ is a primitive root mod $p$ has positive density and he conjectured a formula for that density. Major work on Artin's conjecture is due to Hooley \cite{Hooley}, Murty and Gupta\cite{Murty}, and Heath-Brown\cite{Heath-Brown}.  Since then, work has been done generalizing the Artin conjecture in a variety of directions.
Our result concerning $G(x)$ represents one such direction. Instead of confining ourselves to prime moduli, we consider also composite moduli and instead of considering moduli for which $a$ is of maximal order, we consider the average order of the index of the cyclic group generated by the residue class of $a$.

Our second motivating problem, the problem of counting Artin representations, is actually connected not to $G(x)$ itself but rather to a closely related sum over number fields. Let $K$ be a number field with ring of integers $O_K$. Let $U_K$ be the set of units of $O_K$, and for any ideal $I$ let $U_K(I)$ be the subgroup of $U_K$ formed by elements which are 1 (mod I). Let $U_K(I)^+$ be the subgroup of $U_K(I)$ formed by elements which are positive in all real embeddings of $K$.  Let $$P_K(x)= \sum_{\textbf{N}I \leq x} \frac{\phi(I)}{[U_K:U_K(I)]}$$ with the sum over non-zero ideals of $O_K$. Assume that the unit group of $O_K$ has positive rank. Then for any $\alpha <3$, we have $P_K(x) = O(x^2/ \log^\alpha x)$. $P_K$ represents an analog of $G$ for number fields.

Let us now explain the connection between these sums and the problem of estimating the number of Artin representations of fixed dimension, fixed base field, and  conductor of bounded norm. We focus on the case of dimension 1, so one is essentially counting primitive ray class characters. Let $K$ be a number field and let $\delta_{K,1}(x)$ count the number of 1-dimensional Artin representations up to isomorphism with norm of the conductor at most $x$.  Rohrlich\cite{Rohrlich} has noted the elementary estimate $\delta_{K,1}(x) = O(x^2)$, and if $K$  is the rationals or a quadratic imaginary field, then this is the correct order of growth and one can in fact produce an asymptotic formula, with the constant depending on the field. The method of proof is to note that one has
$$\delta_{K,1}(x) \leq \sum_{\textbf{N} I \leq x}{h_K}^{nar}(I)$$ where ${h_K}^{\mathrm{nar}}(I)$ is the order of the narrow ray class group of $K$ with modulus $I$. In fact, one has $$\delta_{K,1}(x) = \sum_{\textbf{N} I \leq x}\sum_{Q|I}\mu(\frac{I}{Q}){h_K}^{nar}(Q)$$ where $\mu$ is the generalization of the mobius function to ideals.

One has \cite{Lang2} $${h_K}^{\mathrm{nar}}(I)= \frac{2^{r_1}h_K\phi(I)}{[U_K:{U_K}^+(I)]}.$$  Note that ${[U_K:{U_K}^+(I)]}$ and ${[U_K:{U_K}(I)]}$ differ by at most a power of 2, the maximum exponent of which is bounded in terms of the rank of the unit group of the field. Thus, estimating $P_K(x)$ gives us  information about the number of Artin representations, since the other parts in the formula for the order of the  narrow class group are all dependent only on $K$.  Prior to this work, lower bounds for both $P(x)$ and $G(x)$ have been obtained by  Ambrose\cite{Ambrose}, who also conjectured that for both functions, the correct growth order would be $x^{2+o(1)}$. The estimates for $G(x)$ and $P_K(x)$ are similar, as one can think of $G(x)$ as the analog to $P_K(x)$ in the $S$-integers with $S$ equal to the set of prime divisors of $a$, but the proofs are presented separately; it is likely that a framework can be constructed which subsumes both results into a single proof, but as of yet, attempts at such an approach lead to technical difficulties.

\section{Over the rational integers}

\begin{lemma} If $2 \leq y \leq \frac{x}{2}$, then $\frac{x^x}{(x-y)^{x-y}} < e^{2y}x^y$.
\label{lemma1}
\end{lemma}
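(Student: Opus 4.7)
Taking logarithms of both sides, the claimed inequality is equivalent to
\[
x \log x - (x-y)\log(x-y) < 2y + y \log x.
\]
After collecting the $\log x$ terms (using $x\log x - y\log x = (x-y)\log x$), this reduces to the one-variable inequality
\[
(x-y) \log\!\left(\frac{x}{x-y}\right) < 2y.
\]
My plan is to verify this by a short calculus computation. Set
\[
f(y) = 2y - (x-y)\log\!\left(\frac{x}{x-y}\right)
\]
on $[0, x)$. Then $f(0)=0$, and differentiating (writing $(x-y)\log(x/(x-y)) = (x-y)\log x - (x-y)\log(x-y)$) gives
\[
f'(y) = 1 + \log\!\left(\frac{x}{x-y}\right),
\]
which is strictly positive on $(0,x)$ since $x/(x-y)>1$ there. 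Hence $f$ is strictly increasing on $[0,x)$, so $f(y) > 0$ for all $y \in (0,x)$, which is exactly the needed inequality.

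An even cleaner route is to invoke the standard bound $(1 + a/n)^n \leq e^a$ with $a = y$ and $n = x-y$: this yields $(x/(x-y))^{x-y} \leq e^y$, hence $(x-y)\log(x/(x-y)) \leq y < 2y$. In fact this line of attack produces the stronger statement $x^x/(x-y)^{x-y} \leq e^y x^y$.

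The main obstacle: there really is none; this is an elementary single-variable calculus inequality. Note also that the bound $e^{2y}$ appears to be loose, and neither hypothesis $y \geq 2$ nor $y \leq x/2$ is needed for the displayed inequality itself; these presumably reflect the range over which the lemma is invoked later in the paper.
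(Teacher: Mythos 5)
Your proof is correct, and it is close in spirit to the paper's (both take logarithms and ultimately rest on $\log(1+u)\leq u$), but your grouping of terms is genuinely better. The paper writes the logarithm as $x\bigl(\log x-(1-\tfrac{y}{x})\log(x-y)\bigr)$, bounds $\log(x-y)$ by $\log x$ to produce the $y\log x$ term, and then must invoke the hypothesis $y\leq x/2$ to get $\log\frac{x}{x-y}\leq\frac{2y}{x}$ (since it has multiplied through by $x$ rather than by $x-y$); this is where the factor $e^{2y}$ comes from. You instead cancel the $\log x$ terms exactly, reducing to $(x-y)\log\frac{x}{x-y}<2y$, and then $\log(1+u)\leq u$ with $u=y/(x-y)$ gives $(x-y)\log\frac{x}{x-y}\leq y$ on the nose. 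The payoff is that you prove the sharper bound $x^x/(x-y)^{x-y}\leq e^{y}x^{y}$ for all $0<y<x$, with neither hypothesis $y\geq 2$ nor $y\leq x/2$ needed — as you correctly observe, those hypotheses reflect only the range in which the lemma is later applied (inside Lemma \ref{HRlemma}). Both of your verifications of the reduced inequality (the monotonicity argument with $f'(y)=1+\log\frac{x}{x-y}>0$, and the direct appeal to $(1+a/n)^n\leq e^a$) check out.
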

\begin{proof}
Consider
$$\log \frac{x^x}{(x-y)^{x-y}} = x\log x -  (x-y)\log(x-y)=x\left(\log x - (1-\frac{y}{x})\log(x-y) \right).$$ We have $$x\left(\log x - (1-\frac{y}{x})\log(x-y) \right) < x\left(\log\frac{x}{x-y} + \frac{y \log x}{x}\right).$$
Set $u=\frac{x}{x-y}-1=\frac{y}{x-y}$. Since $y \leq x/2$ and $\log(1+u) \leq u$ we have $\log(\frac{x}{x-y}) \leq \frac{2y}{x}$.
Thus we have $\log \frac{x^x}{(x-y)^{x-y}} \leq x\left(\frac{2y}{x} + \frac{y \log x}{x} \right)=2 y + y\log x$, so exponentiating
now gives the desired result.

\end{proof}

\begin{lemma} If $1 \leq j \leq \frac{k-2}{3}$, then ${k \choose j} \leq \frac{{k \choose j+1}}{2}$.
\label{lemma2}
\end{lemma}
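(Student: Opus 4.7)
The plan is to reduce the inequality to the standard ratio identity for consecutive binomial coefficients and check that the hypothesis on $j$ is exactly what is needed.

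First I would rewrite the desired inequality $\binom{k}{j} \le \tfrac{1}{2}\binom{k}{j+1}$ in the equivalent form
$$\frac{\binom{k}{j+1}}{\binom{k}{j}} \ge 2.$$
(For $1 \le j \le (k-2)/3$ we automatically have $j+1 \le k$, so both binomial coefficients are positive and the division is legitimate.)

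Next I would invoke the standard identity
$$\frac{\binom{k}{j+1}}{\binom{k}{j}} = \frac{k-j}{j+1},$$
which follows immediately from the factorial definition. Thus the statement reduces to showing $\frac{k-j}{j+1} \ge 2$, i.e.\ $k - j \ge 2(j+1)$, i.e.\ $k \ge 3j + 2$, i.e.\ $j \le \frac{k-2}{3}$. This is exactly the hypothesis, so the lemma follows.

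There is no substantive obstacle here: the content of the lemma is precisely the observation that the threshold $j \le (k-2)/3$ is the rephrasing of the ratio condition $(k-j)/(j+1) \ge 2$. I would present it as a two-line chain of equivalences after recalling the ratio identity.
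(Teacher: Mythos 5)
Your proposal is correct and follows essentially the same route as the paper: both reduce the claim to the ratio identity $\binom{k}{j}/\binom{k}{j+1} = (j+1)/(k-j)$ and observe that the hypothesis $j \le (k-2)/3$ is exactly equivalent to this ratio being at most $1/2$. Your write-up just spells out the algebra slightly more explicitly.
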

\begin{proof} Note that ${k \choose j}/{k \choose j+1}= (j+1)/(k-j)$.
We have $\frac{j+1}{k-j} \leq \frac{1}{2}$ when $j \leq \frac{k-2}{3}$.
\end{proof}

\begin{lemma} If $2 \leq m \leq \frac{k-2}{3}$ then $\sum_{j=1}^m {k \choose j} \leq \left(\frac{e^2 k}{m}\right)^m$.
\label{HRlemma}
\end{lemma}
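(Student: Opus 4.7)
The plan is to exploit the fact that, by Lemma~\ref{lemma2}, the binomial coefficients $\binom{k}{j}$ at least double as $j$ increases through the range $1 \le j \le m$ (legal because $m \le (k-2)/3$), so the sum is within a constant factor of its final term $\binom{k}{m}$. Lemma~\ref{lemma1} will then provide the explicit upper bound on $\binom{k}{m}$ itself.

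For the first step, iterating Lemma~\ref{lemma2} gives $\binom{k}{j} \le 2^{j-m} \binom{k}{m}$ for every $1 \le j \le m$, and summing the resulting geometric series yields
\[
\sum_{j=1}^{m} \binom{k}{j} \;\le\; \binom{k}{m} \sum_{i=0}^{m-1} 2^{-i} \;<\; 2\binom{k}{m}.
\]
For the second step, I would use the elementary entropy-type estimate $\binom{k}{m} \le k^k/(m^m (k-m)^{k-m})$, which follows by isolating the single $j=m$ term in the multinomial identity $1 = (m/k + (k-m)/k)^k$. Since $m \le (k-2)/3 < k/2$, Lemma~\ref{lemma1} applies with $x=k$ and $y=m$ to give $k^k/(k-m)^{k-m} < e^{2m} k^m$, and hence $\binom{k}{m} < (e^2 k/m)^m$.

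The main obstacle is then to absorb the leftover factor of $2$ from the geometric sum, since naively combining the two steps only yields $\sum_{j=1}^{m}\binom{k}{j} < 2(e^2 k/m)^m$. The required slack is available in either estimate. On the one hand, the constant $e^{2y}$ in Lemma~\ref{lemma1} can be sharpened to $e^y$ by applying $\log(1+u) \le u$ directly with $u = y/(x-y)$ rather than first passing through the cruder bound $2y/x$; this upgrades the bound to $\binom{k}{m} \le (ek/m)^m$, and since $2 \le e^m$ for $m \ge 1$, one has $2(ek/m)^m \le (e^2k/m)^m$, closing the argument. Equivalently, one may bypass Lemma~\ref{lemma1} entirely and use $\binom{k}{m} \le k^m/m!$ together with the textbook inequality $m! \ge (m/e)^m$ to reach the same conclusion.
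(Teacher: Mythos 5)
Your proof is correct, but the key step---bounding $\binom{k}{m}$---is handled quite differently from the paper. The paper uses the two-sided Stirling formula with explicit error terms to get $\binom{k}{m} < \frac{e^{2m}k^m}{\sqrt{\pi m}\,m^m}$ (after invoking Lemma~\ref{lemma1} as stated and the observation $k-m>k/2$), and then absorbs the factor of $2$ from the geometric series into the prefactor via $\frac{2}{\sqrt{\pi m}}<1$, which is exactly where the hypothesis $m\geq 2$ is used. You instead take the elementary route $\binom{k}{m}\leq \frac{k^k}{m^m(k-m)^{k-m}}$ (or $\binom{k}{m}\leq k^m/m!\leq (ek/m)^m$), and you correctly identify that the naive combination with Lemma~\ref{lemma1} as stated leaves an unabsorbed factor of $2$; your fix---sharpening $e^{2y}$ to $e^y$ by applying $\log(1+u)\leq u$ with $u=y/(x-y)$ so that $(x-y)\log\frac{x}{x-y}\leq y$, and then swallowing the $2$ via $2\leq e^m$---is valid, and in fact does not even need $y\leq x/2$ or $m\geq 2$. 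What each approach buys: the paper's Stirling argument keeps Lemma~\ref{lemma1} as a black box and gets the absorption from the $1/\sqrt{\pi m}$ prefactor; yours is more elementary, avoids Stirling entirely, and works already for $m\geq 1$, at the cost of reproving (a sharper form of) Lemma~\ref{lemma1}. Either is a complete proof of the stated bound.
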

\begin{proof} We will first estimate ${k \choose m}$ and then use Lemma \ref{lemma2}  to bound $\sum_{j=1}^m {k \choose j}$.
We require the following version of Stirling's formula, valid for $n \geq 2$,:
$$\sqrt{2\pi n}\left(\frac{n}{e}\right)^n e^{\frac{1}{12n+1}} < n! < \sqrt{2\pi n}\left(\frac{n}{e}\right)^n e^{\frac{1}{12n}}$$ (See for example Lang's {\it Undergraduate Analysis\cite{Lang}.}) We have $$ \frac{k!}{m!(k-m)!} < \frac{\sqrt{2\pi k}\left(\frac{k}{e}\right)^k e^{\frac{1}{12k}} }{\sqrt{2\pi m}\left(\frac{m}{e}\right)^m \sqrt {2\pi (k-m)}\left(\frac{k-m}{e}\right)^{k-m}e^{\frac{1}{12m+1}} e^{\frac{1}{12(k-m)+1}} }.$$
Since $m < k$, we obtain:
$${k \choose m} < \frac{\sqrt{k}k^k}{\sqrt{m}m^m \sqrt{2\pi (k-m)}(k-m)^{k-m}}.$$ Applying Lemma \ref{lemma1} with $x=k$ and $y=m$ , we have $\frac{k^k}{(k-m)^{k-m}} < e^{2m}k^m$,
and thus
$${k \choose m} < \frac{e^{2m}k^m\sqrt{k}}{m^m\sqrt{m}\sqrt{2\pi (k-m)}}.$$ Since $m \leq \frac{k-2}{3}<\frac{k}{2}$, $k-m >\frac{k}{2}$ and so,
$${k \choose m} < \frac{e^{2m}k^m\sqrt{k}}{m^m\sqrt{m}\sqrt{\pi k }} = \frac{e^{2m}k^m}{\sqrt{m}\sqrt{\pi}m^m}.$$ By Lemma \ref{lemma2},
we have that $$\sum_{j=1}^m{k \choose j} \leq {k \choose m} + \frac{{k \choose m}}{2} + \frac{{k \choose m}}{4} \cdots = 2{k \choose m}< 2\frac{e^{2m}k^m}{\sqrt{m}\sqrt {\pi}m^m}$$
Since $m \geq 2$, $\frac{2}{\sqrt{\pi m}} < 1$ so we are done.
\end{proof}

\begin{lemma} For $\beta>0$, let $D_\beta(x)$ be the number of positive integers $n$ with $\omega(n) \geq (\log x)^\beta$. Then for any fixed $\beta$, we have
$D_\beta(x) = O(\frac{x (\log \log)^2}{(\log x)^{2\beta}})$, where the implied constant depends on $\beta$.
\label{Hardy1}
\end{lemma}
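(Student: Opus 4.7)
The plan is to apply Chebyshev-Markov with the second moment of $\omega$. First I would note that the statement implicitly restricts to $n \leq x$ (otherwise $D_\beta(x)$ would be infinite). Setting $L = (\log x)^\beta$, any $n$ counted by $D_\beta(x)$ satisfies $\omega(n)^2 \geq L^2$, so
$$D_\beta(x)\cdot L^2 \;\leq\; \sum_{n \leq x, \, \omega(n) \geq L} \omega(n)^2 \;\leq\; \sum_{n \leq x} \omega(n)^2.$$

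Next I would estimate the second moment by the standard double-counting trick. Writing $\omega(n) = \sum_{p \mid n} 1$, squaring, and interchanging summation gives
$$\sum_{n \leq x} \omega(n)^2 \;=\; \sum_{p, q \text{ prime}} \left\lfloor \frac{x}{\mathrm{lcm}(p,q)} \right\rfloor \;\leq\; x\sum_{p \leq x} \frac{1}{p} + x\left(\sum_{p \leq x} \frac{1}{p}\right)^2,$$
where I split according to whether $p=q$ or $p \neq q$. By Mertens' theorem $\sum_{p \leq x} 1/p = \log\log x + O(1)$, and hence $\sum_{n \leq x} \omega(n)^2 = O(x (\log\log x)^2)$. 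Dividing by $L^2 = (\log x)^{2\beta}$ then yields $D_\beta(x) = O(x (\log\log x)^2 / (\log x)^{2\beta})$, which is the claimed bound.

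This approach does not actually invoke Lemmas 1--3, which are binomial-coefficient estimates presumably saved for later applications in the paper. There is no serious obstacle here: the choice of the second moment is essentially forced by the shape of the target, since the $(\log\log x)^2$ factor in the numerator and $(\log x)^{2\beta}$ in the denominator are exactly what one gets from the $k=2$ moment. A higher moment would give a stronger bound, and a first moment would give only $O(x\log\log x / (\log x)^\beta)$, which is too weak. The only item deserving care is handling $p = q$ separately in the $\mathrm{lcm}$-expansion, but that contributes only a single $\log\log x$ factor and is absorbed into the leading $(\log\log x)^2$ term.
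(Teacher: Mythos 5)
Your proof is correct and is essentially the paper's approach: the paper simply cites the second-moment estimate from Hardy and Wright (22.11.6) and applies Chebyshev--Markov, which is exactly the argument you write out in full (and which the paper itself carries out explicitly in the number-field analogue, Lemmas \ref{idealboundjk} and \ref{omegacountbound2}). You correctly note the implicit restriction to $n \leq x$, and your double-counting of $\sum_{n \leq x} \omega(n)^2$ with the $p=q$ and $p \neq q$ cases separated is the standard computation behind the cited result.
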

\begin{proof}
This follows from 22.11.6 (page 357) in Hardy and Wright\cite{Hardy}
\end{proof}

\begin{theorem} For any $\alpha <3$, we have $G(x) = O(\frac{x^2}{\log^\alpha x})$
\end{theorem}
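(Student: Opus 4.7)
The plan is to decompose $G(x)$ by the number of distinct prime divisors $\omega(n)$. Fix a parameter $\beta$ with $\beta > \alpha/2$ and set $B = (\log x)^{\beta}$. Split
\[
G(x) = S_1(x) + S_2(x),
\]
where $S_1$ collects the terms with $\omega(n) \geq B$ and $S_2$ the terms with $\omega(n) < B$.

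For $S_1$, I use the trivial estimate $\phi(n)/\mathrm{ord}_n(a) \leq \phi(n) \leq n \leq x$ in combination with Lemma \ref{Hardy1}:
\[
S_1(x) \leq x \cdot D_\beta(x) = O\!\left( \frac{x^2 (\log\log x)^2}{(\log x)^{2\beta}} \right) = O\!\left( \frac{x^2}{(\log x)^\alpha} \right),
\]
since $2\beta > \alpha$.

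For $S_2$, subdivide by $k = \omega(n)$ for $1 \leq k < B$ and, on each slice, combine the order lower bound $\mathrm{ord}_n(a) \geq (\log n)/\log a$ (possibly sharpened by noting that $\mathrm{ord}_n(a) \geq \mathrm{ord}_p(a)$ for each prime $p \mid n$) with a Hardy--Ramanujan count for $\#\{n \leq x : \omega(n) = k\}$. After summing over $n$ within a slice, one obtains an expression involving binomial coefficients of the form $\binom{\pi(x)}{k}$ that arise from choosing $k$ primes below $x$ to be the divisors of $n$. Lemma \ref{HRlemma}, applied with the role of $k$ played by $\pi(x)$ and the role of $m$ by $B$, is precisely the tool to bound the sum of these binomial coefficients over $1 \leq k < B$ in a way that absorbs the $k$-dependence uniformly.

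The main obstacle lies in the $S_2$ piece: the straightforward combination of $\mathrm{ord}_n(a) \geq (\log n)/\log a$ with the Hardy--Ramanujan count produces essentially only one $\log x$ of savings (since $\sum_{k \geq 1} (\log\log x)^{k-1}/(k-1)! \asymp \log x$), giving only $O(x^2/\log x)$, whereas the target is $O(x^2/(\log x)^\alpha)$ with $\alpha$ close to $3$. Recovering the missing $(\log x)^{2 - \epsilon}$ requires a more refined use of the prime-factor structure of $n$: for instance, controlling $\mathrm{ord}_n(a)$ via the largest prime divisor $P^{+}(n)$ (which, for typical $n \leq x$ with few prime factors, is of size comparable to $x$), and then carefully reconciling this sharpened bound against the binomial-coefficient estimate of Lemma \ref{HRlemma}. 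Balancing the threshold $B$ so that both $S_1$ and $S_2$ meet the target $O(x^2/(\log x)^\alpha)$ is the technical crux.
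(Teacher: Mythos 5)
Your decomposition captures only half of the paper's argument, and the half you leave unresolved is exactly where the theorem lives. The gap is real: as you yourself concede, splitting solely by $\omega(n)$ and using $\mathrm{ord}_n(a) \geq \log_a n$ on the small-$\omega$ piece cannot yield more than one power of $\log x$. The missing idea is to stratify \emph{first} by the size of $\mathrm{ord}_n(a)$, with three ranges. For $\mathrm{ord}_n(a) \geq (\log x)^\alpha$ the trivial bound (at most $x$ terms, each at most $x/(\log x)^\alpha$) already gives the target, with no reference to $\omega(n)$ at all. For $\mathrm{ord}_n(a) \leq 3\log_a x$, every such $n$ divides $a^r-1$ for some $r \leq 3\log_a x$, so a divisor-count gives only $O(x^{3/2}\log x)$ terms' worth of contribution. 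Only in the middle range $3\log_a x \leq \mathrm{ord}_n(a) \leq (\log x)^\alpha$ does one split by $\omega(n)$, and there the constraint $n \mid a^r-1$ with $r \leq (\log x)^\alpha$ is what saves you: such $a^r-1$ has at most about $(\log x)^\alpha$ distinct prime factors, so Lemma \ref{HRlemma} is applied with $k \approx (\log x)^\alpha$, \emph{not} with $k = \pi(x)$ as you propose. With $k = \pi(x)$ the bound $(e^2\pi(x)/m)^m$ with $m = (\log x)^\beta$ has logarithm of order $(\log x)^{1+\beta}$, which is superpolynomial in $x$ and useless; with $k \approx (\log x)^\alpha$ it gives $x^{o(1)}$ integers in the small-$\omega$ slice, each contributing at most $x$, hence a negligible $O(x^{1+\epsilon_0})$.

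Your choice of threshold is also incompatible with any repair along these lines. You take $\beta > \alpha/2$, which for $\alpha$ near $3$ forces $\beta > 1$; but the count of divisors with at most $(\log x)^\beta$ prime factors is $\exp\bigl(O((\log x)^\beta \log\log x)\bigr)$, which is subpolynomial only when $\beta < 1$. The paper escapes this by exploiting the order stratification once more: on the large-$\omega$ piece of the middle range, every term is at most $x/(3\log_a x)$ rather than $x$, because the terms of small order were already peeled off. That extra factor of $\log x$ lets one take $\beta = (\alpha-1)/2 < 1$ and still land on $O\bigl(x^2(\log\log x)^2/(\log x)^{2\beta+1}\bigr) = O\bigl(x^2(\log\log x)^2/(\log x)^{\alpha}\bigr)$ via Lemma \ref{Hardy1}. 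Your $S_1$ estimate is fine as far as it goes, but without the order stratification the two halves of your split cannot be balanced, and the speculative remarks about $P^{+}(n)$ do not supply the missing two powers of $\log x$.
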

\begin{proof} We will prove the equivalent result that  for any fixed $\alpha$ such that $1< \alpha< 3$, we have $G(x) = O(\frac{x^2 (\log \log x)^2}{(\log x)^\alpha})$.

Let $S(x)$ be the set of $n \leq x$ with  $(n,a)=1$. Define $S(x,p,q)$ to be the subset of $S(x)$ with $p \leq \mathrm{ord}_n(a) \leq q$. Set $t = (\log x)^\alpha$.  Note that if $x$ is sufficiently large we have that $t > 3\log_a x$.  Write $\ell(x)=3\log_a x$. Then we have:

\begin{equation}\label{Gbreakdown} G(x) \leq \mathrm{I}(x) + \mathrm{II}(x) + \mathrm{III}(x)  \end{equation}
where \begin{equation} \label{Idef1} \mathrm{I}(x)= \sum_{n \in S(x,1,\ell(x))} \frac{n}{\mathrm{ord}_n(a)}\end{equation} \begin{equation}\label{IIdef1} \mathrm{II}(x)= \sum_{n \in S(x,\ell(x),t)} \frac{n}{\mathrm{ord}_n(a)}\end{equation} and \begin{equation}\label{IIIdef1} \mathrm{III}(x)= \sum_{n \in S(x,t, \infty) } \frac{n}{\mathrm{ord}_n(a).}\end{equation}  We will need to estimate each of these sums separately.

The easiest two sums to estimate are $\mathrm{I}(x)$ and $\mathrm{III}(x)$. $\mathrm{I}(x)$ has at  most $\sum_{1 \leq r \leq \ell(x)} \tau(a^r-1)$ terms. We have
$$\sum_{\substack{1 \leq r \leq \ell(x) }} \tau(a^r-1) \leq \ell(x)\tau_m(x)$$ where $\tau_m(x)$ is the maximum of $\tau(a^r-1)$ with $r$ ranging
from $1$ to $\ell(x)$. Since $\tau(n)=O(n^{1/6})$, $\tau_m(x)=O(x^{1/2})$. Any term in $I(x)$ is bounded above by $x$, and so we conclude that
$I(x)= O(x^{1/2} \ell (x)x) = O(x^{3/2}\log x)$.

Next, we need to estimate $\mathrm{III}(x)$. In this case we use that it has at most $x$ terms and each term is at most $x/t$ and so $\mathrm{III}(x) \leq x^2/t$.

To estimate $\mathrm{II}(x)$ we need some preliminary remarks.

Given any $\epsilon >0$,  we have $\omega(n) < \frac{(1+\epsilon)\log n}{\log \log n}$ for all but finitely many $n$. For simplicity, we take $\epsilon=1$ and so for all but finitely many $n$ we have  $\omega(n) < \frac{2\log n}{\log \log n}$. Let $N$ be the set of $d$ such that
$(a,d)=1$ and $d|n$ for some $n$ satisfying $\omega(n) \geq \frac{2\log n}{\log \log n} $. Note that $N$ is finite.
Since $N$ is finite, $\sum_{n \in N, n \leq x} \frac{\phi(n)}{\mathrm{ord}_n(a)} = O(1)$.

Set $\beta = \frac{\alpha-1}{2}$. Set $H(x)$ to be the subset of elements of $S(x,\ell(x),t)$ and not in $N$ which have at most $(\log x)^\beta$ distinct prime factors,  and set \begin{equation}\label{II1def1} \mathrm{II}_1(x)= \sum_{n \in H(x)} \frac{n}{\mathrm{ord}_n(a)}.\end{equation} Similarly, let $J(x)$ be the elements with more than $(\log x)^\beta$ distinct prime factors and not in $N$
and set  \begin{equation}\label{II2def1} \mathrm{II}_2(x) =  \sum_{n \in J(x)} \frac{n}{\mathrm{ord}_n(a)}.\end{equation} Thus, $\mathrm{II}(x)= \mathrm{II}_1(x) + \mathrm{II}_2(x) +O(1)$.  So we need only bound $\mathrm{II}_1(x)$ and $\mathrm{II}_2(x)$.

To estimate $\mathrm{II}_1(x)$, we need to estimate $H(x)$. If $d \in H(x)$, then $d|a^r-1$ with $\ell(x) \leq r \leq t$. So we need to estimate how many
 divisors $a^r-1$ can have which are less than or equal to  $x$ and not in $N$.  Note that the number of prime factors of $a^r-1$ is bounded by  $$\omega(a^r-1) \leq \frac{2\log a^r}{\log \log a^r} \leq \frac{(3\log a)r}{\log r} \leq t.$$ The last inequality above is valid for sufficiently large values of $t$. Recall that $t$ grows with $x$.

 Thus,  each $a^r-1$ whose divisors contribute to $H(x)$ has at most $t$ distinct prime divisors.
However, each element in $H(x)$ has at $j$ distinct prime divisors, for some $j$ satisfying $1 \leq j \leq (\log x)^\beta$. So for any given $r$, there are at most at most $\sum_{j=1}^m {k \choose j}$ possible choices for the distinct prime divisors where $k$ is the largest integer less than or equal $t=(\log x)^\alpha$
and $m$ is the largest integer less than or equal to $(\log x)^\beta$. Any prime factor of such a divisor can
be raised to at most the $\log_2 x$ power, so the total number of divisors is bounded by  $(\sum_{j=1}^m {k \choose j})(\log_2 x)^{{(\log x)}^{\beta}}.$ There are at most $t$ possible values of $r$.  Thus,
$$|H(x)| \leq t\sum_{j=1}^m {k \choose j}(\log_2 x)^{(\log x)^\beta}.$$  Applying Lemma \ref{HRlemma} and using the values of $k$ and $m$ then gives $$ |H(x)|\leq t\left(\frac{e^2 t }{(\log x)^\beta }\right)^{(\log x)^\beta} (\log_2 x)^{(\log x)^\beta}.$$ Here we can replace the floor of $(\log x)^\beta$ since $\left(\frac{e^2 t }{(s}\right)^s$ is an increasing function in $s$ when $s$ is small compared to $t$.

 Thus we have $$\log |H(x)| \leq C(\log x)^\beta(\log \log x) + \log t + (\log \log_2 x)(\log x)^\beta  $$ for some constant $C$ and this is $O((\log x)^\beta \log \log x)$. Thus $|H(x)| = O(x^{\epsilon_0})$, and so $II_1(x) = O(x^{1+\epsilon_0})$ for any $\epsilon_0>0$.

To estimate  $\mathrm{II}_2(x)$ we apply Lemma \ref{Hardy1} so that the number of terms of $J$ is $O(\frac{x (\log \log x)^2}{(\log x)^{2\beta}})$ and thus conclude that $$\mathrm{II}_2(x) = O\left(\frac{x^2 (\log \log x)^2}{(\log x)^{2\beta +1}}\right)$$ since every
term in $\mathrm{II}_2(x)$ is at most $\frac{x}{\ell(x)}$.
\end{proof}

\section{Number fields}

Let $K$ be a number field and let $O_K$ be its ring of integers. Set $d= [K:Q]$. Define  $\omega(I)$ to be the number
of distinct prime ideal divisors of $I$, where $I$ is an ideal of $O_K$. Define  $j_K(x)= \sum_{\textbf{N}I \leq x} \omega(I)^2$.

\begin{lemma} $j_K(x)= O(x(\log \log x )^2)$.
\label{idealboundjk}
\end{lemma}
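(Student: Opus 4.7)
\medskip

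\noindent \textbf{Proof plan.} The plan is to adapt the standard elementary proof of the rational-integer estimate $\sum_{n \leq x} \omega(n)^2 = O(x(\log\log x)^2)$ to the setting of ideals. First I would expand the square as
\[
j_K(x) = \sum_{\mathbf{N}I \leq x}\Bigl(\sum_{\mathfrak{p} \mid I} 1\Bigr)^2 = \sum_{\mathfrak{p},\mathfrak{q}} \#\{ I : \mathbf{N}I \leq x,\ \mathfrak{p}\mid I,\ \mathfrak{q}\mid I\},
\]
the outer sum running over ordered pairs of prime ideals, and then split this into the diagonal contribution $\mathfrak{p} = \mathfrak{q}$ and the off-diagonal contribution $\mathfrak{p} \neq \mathfrak{q}$.

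The main tool needed is Landau's prime-ideal counting bound, which gives $\#\{I : \mathbf{N}I \leq y\} = O(y)$ with implied constant depending only on $K$. Applied to ideals of the form $\mathfrak{p}J$ (respectively $\mathfrak{p}\mathfrak{q}J$), it yields
\[
\#\{ I : \mathbf{N}I \leq x,\ \mathfrak{p}\mid I\} = O\bigl(x/\mathbf{N}\mathfrak{p}\bigr), \qquad \#\{I : \mathbf{N}I \leq x,\ \mathfrak{p}\mathfrak{q}\mid I\} = O\bigl(x/(\mathbf{N}\mathfrak{p}\,\mathbf{N}\mathfrak{q})\bigr)
\]
(the second bound being valid when $\mathfrak{p} \neq \mathfrak{q}$, since then $\mathfrak{p}\mathfrak{q}$ divides $I$ iff their product does). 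Summing these bounds over primes gives
\[
j_K(x) \ll x \sum_{\mathbf{N}\mathfrak{p} \leq x} \frac{1}{\mathbf{N}\mathfrak{p}} + x\Bigl(\sum_{\mathbf{N}\mathfrak{p} \leq x} \frac{1}{\mathbf{N}\mathfrak{p}}\Bigr)^2.
\]

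It then remains to show $\sum_{\mathbf{N}\mathfrak{p} \leq x} 1/\mathbf{N}\mathfrak{p} = O(\log\log x)$. For this I would use the crude observation that at most $d = [K:\mathbb{Q}]$ prime ideals lie above any given rational prime $p$, and each such $\mathfrak{p}$ has $\mathbf{N}\mathfrak{p} \geq p$; hence the sum is dominated by $d \sum_{p \leq x} 1/p = O(\log\log x)$ by Mertens' theorem. Substituting this back makes the squared term the dominant one and yields $j_K(x) = O(x(\log\log x)^2)$, as required.

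The only mildly delicate step is making sure the off-diagonal count is actually bounded by $x/(\mathbf{N}\mathfrak{p}\,\mathbf{N}\mathfrak{q})$ rather than just $x/\mathbf{N}\mathfrak{p}$; this uses unique factorization of ideals in $O_K$ to identify $I$ with $\mathfrak{p}\mathfrak{q}J$ for $J$ an arbitrary integral ideal of norm $\leq x/(\mathbf{N}\mathfrak{p}\,\mathbf{N}\mathfrak{q})$. Everything else is a routine application of Landau's bound and Mertens' theorem, so I do not expect any real obstacle.
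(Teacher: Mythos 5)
Your proposal is correct and follows essentially the same route as the paper: expand $\omega(I)^2$ over pairs of prime ideals, invert the order of summation using the bound $\#\{I:\mathbf{N}I\leq y\}=O(y)$, and reduce everything to $\sum_{\mathbf{N}\mathfrak{p}\leq x}1/\mathbf{N}\mathfrak{p}=O(\log\log x)$. Your direct estimate $\sum_{\mathbf{N}\mathfrak{p}\leq x}1/\mathbf{N}\mathfrak{p}\leq d\sum_{p\leq x}1/p$ is a slight streamlining of the paper's separate treatment of degree-one and higher-degree prime ideals, but the argument is otherwise the same.
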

\begin{proof} We generalize the argument in Hardy and Wright (who treat the case $K=Q$). Note that $$\sum_{\substack{\textbf{N}I \leq x \\ \omega(I)=1}} \omega(I) = O(x).$$ But if $\omega(I) \geq 2$, then $$\omega(I)^2 - \omega(I) \geq \frac{\omega(I)^2}{2}.$$  Thus, we have

$$j_K(x) \leq 2\sum_{\textbf{N}I \leq x } \omega(I)(\omega(I)-1)  +O(x).$$ Thus, we need only show that
 $$\sum_{\textbf{N}I \leq x } \omega(I)(\omega(I)-1)= O(x(\log \log)^2) .$$ For a given ideal $I$, consider distinct prime ideals $P$ and $Q$ which divide $I$. There are $\omega(I)$ possible choices for $P$ and $\omega(I)-1$ possible choices for $Q$. Thus we have $$\sum_{\textbf{N}I \leq x } \omega(I)(\omega(I)-1) = \sum_{\textbf{N}I \leq x}\left(\sum_{PQ|I} 1\right) $$ where the sum is over distinct prime ideals. So we need only estimate $$\sum_{\textbf{N}I \leq x}\left(\sum_{PQ|I} 1\right).$$

Now, let $k$ be a constant such that the number of ideals of norm at most $x$ is bounded  by $kx$. Then,
$$\sum_{\textbf{N}I \leq x}\sum_{PQ|I} 1 \leq \sum_{\textbf{N}(PQ) \leq x} \frac{kx}{\textbf{N}(PQ)}$$ where the sum is over $P, Q$ prime. So if we can show that $$\sum_{\textbf{N}(pq) \leq x} \frac{x}{\textbf{N}(PQ)} = O(x (\log \log x)^2)$$ we will be done.

There are two contributions to this sum, terms where $\textbf{N} P$ and $\textbf{N} Q$ are both prime ideals of degree 1, versus terms where at least one prime is of degree greater than 1. Consider first the terms where at least one of $\textbf{N} P$ and $\textbf{N} Q$ is a power $\geq 2$ of a prime. Without loss of generality, consider the terms where $\textbf{N} P$ is a power $\geq 2$ of a prime. Since there are at most $d$ primes ideals of norm $p^a$ (where $p$ is a rational prime) and similarly at most $d$ primes of norm $q$ where $q$ is a rational prime, we then have: $$\sum_{\textbf{N}{PQ} \leq x, \textbf{N}P=p^a} \frac{x}{\textbf{N}(PQ)} \leq \sum_{\substack{a\geq 2 \\ p, q \leq x}} \frac{d^2 x }{p^a q}.$$ Here the sum on the left-hand of the inequality is over prime ideals $P$ of degree greater than 1, and  over prime ideals $Q$ of any degree.   We have $\sum_{a \geq 2}\frac{1}{p^a} \leq \frac{2}{p^2}$ and so
$$\sum_{\substack{a\geq 2 \\ p, q \leq x}} \frac{d^2 x }{p^a q} \leq \sum_{{q \leq x, p \leq x^{1/2}}} \frac{2d^2 x }{p^2 q} \leq \left(\sum \frac{2d^2}{p^2}\right) \sum_{q^i \leq x}{\frac{x}{q^i}} = O(x \log \log x.)$$

Thus we need to only examine $\sum_{\textbf{N} PQ \leq x} \frac{x}{\textbf{N} PQ}$ with the sum taken over ideals with prime norm.
This sum is bounded by $$\sum_{p,q \leq x}\frac{x}{pq} \leq  x \sum_{p,q \leq x}\frac{1}{pq} \leq x(\sum_{p \leq x} 1/p)^2 = O(x (\log \log x)^2).$$
\end{proof}

As an immediate corollary of the above we obtain:

\begin{lemma}
\label{omegacountbound2}
For $\beta>0$, let $D_\beta(x)$ be the number of ideals $I$ with $\omega(I) \geq (\log x)^\beta$, and $\textbf{N}I \leq x$. Then for any fixed $\beta$, we have
$D_\beta(x) = O(\frac{x (\log \log)^2}{(\log x)^{2\beta}})$, where the implied constant depends on $\beta$.
\end{lemma}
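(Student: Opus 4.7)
The plan is to derive this directly from Lemma \ref{idealboundjk} by a one-line Markov-style inequality, which is exactly why the authors label it ``an immediate corollary.''

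First I would observe that every ideal $I$ counted by $D_\beta(x)$ satisfies $\omega(I)^2 \geq (\log x)^{2\beta}$. Therefore these ideals together contribute at least $D_\beta(x) \cdot (\log x)^{2\beta}$ to the sum defining $j_K(x)$. Since all terms in $j_K(x)$ are nonnegative, this gives
\[
D_\beta(x) \cdot (\log x)^{2\beta} \;\leq\; \sum_{\mathbf{N}I \leq x} \omega(I)^2 \;=\; j_K(x).
\]

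Next I would invoke Lemma \ref{idealboundjk}, which tells us that $j_K(x) = O(x(\log\log x)^2)$. Dividing through by $(\log x)^{2\beta}$ yields
\[
D_\beta(x) \;=\; O\!\left(\frac{x(\log\log x)^2}{(\log x)^{2\beta}}\right),
\]
with the implied constant depending on $\beta$ (through the fact that the division by $(\log x)^{2\beta}$ is well defined for $x$ large enough that $\log x > 0$, and on the constant from Lemma \ref{idealboundjk}, which depends only on $K$).

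There is essentially no obstacle here: the only thing to check is that the Markov-type bound is applied correctly, i.e., that we are using $\omega(I)^2$ rather than $\omega(I)$, so that the threshold $(\log x)^\beta$ gets squared to produce the desired $(\log x)^{2\beta}$ in the denominator. No further arithmetic or number-theoretic input beyond Lemma \ref{idealboundjk} is required.
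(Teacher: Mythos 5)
Your proof is correct and is precisely the argument the paper has in mind: the paper gives no written proof, simply presenting the lemma as "an immediate corollary" of Lemma \ref{idealboundjk}, and your Markov-type inequality $D_\beta(x)\,(\log x)^{2\beta} \leq j_K(x)$ is exactly the one-line derivation being alluded to.
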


For a matrix $A$ with coefficients in $\mathbb{C}$, we will denote by $M(A)$  the maximum of the absolute value of the
entries of $A$.

\begin{lemma} Let $A$ be an $n \times n$ matrix with coefficients in $\mathbb{C}$. Then $|\det(A)| \leq  n! M(A)^n.$
\label{determinantbound}
\end{lemma}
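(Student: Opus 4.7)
The plan is to prove this bound directly from the Leibniz expansion of the determinant. Recall that
\[
\det(A) = \sum_{\sigma \in S_n} \mathrm{sgn}(\sigma) \prod_{i=1}^{n} a_{i,\sigma(i)},
\]
where the sum is over all permutations of $\{1, 2, \ldots, n\}$.

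First I would apply the triangle inequality to obtain
\[
|\det(A)| \leq \sum_{\sigma \in S_n} \left| \prod_{i=1}^{n} a_{i,\sigma(i)} \right| = \sum_{\sigma \in S_n} \prod_{i=1}^{n} |a_{i,\sigma(i)}|.
\]
Then I would bound each factor $|a_{i,\sigma(i)}|$ by $M(A)$, the maximum absolute value of an entry of $A$, giving $\prod_{i=1}^{n} |a_{i,\sigma(i)}| \leq M(A)^n$ for every permutation $\sigma$. Since $|S_n| = n!$, summing over all $\sigma$ yields $|\det(A)| \leq n! \, M(A)^n$, as desired.

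There is no real obstacle here: the argument is a one-step application of the triangle inequality to the standard permutation formula for the determinant, followed by replacing each entry by the uniform upper bound $M(A)$. This is the crudest possible bound (far weaker than, say, Hadamard's inequality), but it suffices for the application the authors have in mind.
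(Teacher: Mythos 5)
Your proof is correct and is essentially identical to the paper's: both expand $\det(A)$ via the Leibniz permutation formula, apply the triangle inequality, bound each of the $n$ factors by $M(A)$, and sum over the $n!$ permutations. Nothing further is needed.
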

\begin{proof} We have $$\det(A) = \sum_{\sigma \in S_n} sgn(\sigma) \prod_{i=1}^n A_{i,\sigma(i)}.$$ So
$$|\det(A)| \leq \sum_{\sigma \in S_n} |\prod_{i=1}^n A_{i,\sigma_i}| \leq \sum_{\sigma \in S_n} M(A)^n = n! M(A)^n.$$
\end{proof}

\begin{lemma}
\label{maximummatrixentry} Let $A$ and $B$ be $n \times n$ matrices with coefficients in $\mathbb{C}$. Then $M(AB) \leq nM(A)M(B)$. Thus, for any $m$, $M(A^m) \leq (nM(A))^m$.
\end{lemma}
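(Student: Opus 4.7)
The plan is to prove the first inequality directly from the definition of matrix multiplication, and then obtain the second claim by a straightforward induction on $m$.

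First I would expand a generic entry of the product: $(AB)_{ij} = \sum_{k=1}^n A_{ik} B_{kj}$. Applying the triangle inequality and then bounding each factor by the maximum gives
\[
|(AB)_{ij}| \leq \sum_{k=1}^n |A_{ik}|\,|B_{kj}| \leq \sum_{k=1}^n M(A)M(B) = nM(A)M(B).
\]
Taking the maximum over $i$ and $j$ yields $M(AB) \leq nM(A)M(B)$, which is the first assertion.

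For the second assertion I would proceed by induction on $m$. The base case $m=1$ is immediate since $M(A) \leq nM(A)$. Assuming $M(A^{m-1}) \leq (nM(A))^{m-1}$, the first part applied to $A$ and $A^{m-1}$ gives
\[
M(A^m) = M(A \cdot A^{m-1}) \leq nM(A)\,M(A^{m-1}) \leq nM(A) \cdot (nM(A))^{m-1} = (nM(A))^m.
\]
There is no real obstacle here — the lemma is purely a bookkeeping estimate — so the only thing to be careful about is that the inductive step uses the already-established first inequality rather than a second application of the triangle inequality.
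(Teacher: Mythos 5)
Your proof is correct and follows essentially the same route as the paper: bound each entry of $AB$ as a sum of $n$ terms each at most $M(A)M(B)$, then iterate for powers. The paper merely states the iteration "follows from the first part" while you spell out the induction, which is a harmless elaboration.
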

\begin{proof}For any $i,j,k,l,$ $|A_{ij}B_{kl}| \leq M(A)M(B)$.  Since every entry in $AB$ arises from a sum of $n$ terms each of which is a product of an entry in $A$ and an entry $B$, the desired bound follows. The second part of the lemma follows from the first.

\end{proof}

Henceforth, we will assume that $O_K$ has a unit $a$ of infinite order (that is $K$ is not the rationals or a quadratic imaginary field).

\begin{lemma}For any $a$ in $O_K^{\times}$,  there exists a constant $C>1$ such that for all $k$, $$|N_{K/\mathbb{Q}}(a^k-1)| \leq C^k.$$
\label{fieldnormbound}
\end{lemma}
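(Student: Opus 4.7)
The plan is to express $N_{K/\mathbb{Q}}(a^k-1)$ as the determinant of an integer matrix and then apply Lemmas \ref{determinantbound} and \ref{maximummatrixentry}. Fix any $\mathbb{Z}$-basis of $O_K$ and let $A$ be the $d \times d$ integer matrix representing multiplication by $a$ on $O_K$. Then multiplication by $a^k-1$ is represented by $A^k - I$, and by the definition of the field norm
$$|N_{K/\mathbb{Q}}(a^k-1)| = |\det(A^k - I)|.$$

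Next I would bound the entries of $A^k - I$. Lemma \ref{maximummatrixentry} gives $M(A^k) \leq (dM(A))^k$, so
$$M(A^k - I) \leq M(A^k) + 1 \leq (dM(A))^k + 1.$$
Lemma \ref{determinantbound} then yields
$$|\det(A^k - I)| \leq d!\,\bigl((dM(A))^k + 1\bigr)^d.$$
Setting $B = dM(A) + 1$, the binomial theorem gives $(dM(A))^k + 1 \leq B^k$ for every $k \geq 1$, and therefore
$$|N_{K/\mathbb{Q}}(a^k-1)| \leq d!\,B^{dk} = d!\,(B^d)^k.$$

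Finally, I would absorb the factor $d!$ into the exponential. Since $k \geq 1$ and $d! \geq 1$, we have $d!\,(B^d)^k \leq (d!\,B^d)^k$, so $C := d!\,B^d$ (enlarged to exceed $1$ if necessary) satisfies the desired inequality for all $k \geq 1$. There is no real obstacle here: the statement is a routine exponential bound, and the preceding two lemmas are tailor-made for it. The only point worth flagging is that every inequality above holds uniformly for $k \geq 1$, which is precisely what allows a single constant $C$ to absorb both the $d!$ factor and the additive $+1$ coming from the $-I$ term.
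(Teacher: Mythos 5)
Your proposal is correct and follows essentially the same route as the paper: represent multiplication by $a$ as a $d\times d$ matrix $A$, bound $M(A^k-\mathrm{I})$ via Lemma \ref{maximummatrixentry}, and then bound the determinant via Lemma \ref{determinantbound}, absorbing the $d!$ and the $+1$ into a single exponential constant. The only cosmetic difference is that you work with a $\mathbb{Z}$-basis of $O_K$ while the paper uses a $\mathbb{Q}$-basis of $K$; your handling of the final constant is, if anything, slightly cleaner.
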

\begin{proof} Pick some Archimedean embedding of $K$ into $\mathbb{C}$ and let $\lambda_1, \lambda_2, \cdots \lambda_d$ be a basis for $K$ over $\mathbb{Q}$. Then for some $a_{ij}$ we have $$a\lambda_i=\sum_j a_{ij}\lambda_j.$$
 We have $N_{K/\mathbb{Q}}(a)= \det(A)$ where $A$ is the matrix with entries $a_{ij}$. Then, $$N_{K/\mathbb{Q}}(a^k)= \det(A^k).$$ By Lemma \ref{maximummatrixentry} $$M(A^k) \leq (dM(a))^k.$$ Let $B$ be the matrix with coefficients $b_{ij}$ defined by $$(a^k-1)\lambda_i =\sum_j b_{ij}\lambda_j.$$ So $N_{K/\mathbb{Q}}(a^k-1) = \det B$. Note that $B$ depends on $a$ and $k$.  By Lemma \ref{determinantbound}, we  have $$|N_{K/\mathbb{Q}}(a^k-1)| = |\det B| \leq d!M(B)^d.$$ Since $M(B) \leq M(A^k)+1$, we can apply Lemma \ref{maximummatrixentry} and Lemma \ref {determinantbound} so $$ |\det B| \leq d!(dM(A)^k+1)^d \leq (d!(1+dM(A)))^{dk}.$$ So we may take $C = (d!(1+dM(A)))^d$.

\end{proof}

 For any ideal $I$ of $O_K$ let $o_a(I)$ be the smallest positive integer such that $I|(a^{o_a}-1)$. So, $o_a$ is the order of $a$ in the group $(O_K/I)^{\times}$.

\begin{lemma}
 There is a constant $C> 1$ depending on $K$ and $a$ such that for any ideal $I$ of $O_K$,  we have $o_a \geq \log_{C} \textbf{N}I$.
 \label{idealcountlemma}
\end{lemma}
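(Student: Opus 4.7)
The proof is a direct translation of the divisibility relation $I \mid (a^{o_a}-1)$ into a numerical inequality via norms, using Lemma \ref{fieldnormbound} as the main input. The plan is to take the same constant $C$ produced in Lemma \ref{fieldnormbound} for the unit $a$ and show it already works.

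First, I would unpack the definition: by definition of $o_a = o_a(I)$, we have $I \mid (a^{o_a}-1)$ in $O_K$, meaning $I$ divides the principal ideal $(a^{o_a}-1)$. Since $a$ has infinite order (by the standing hypothesis that $O_K^\times$ has positive rank and $a$ is a unit of infinite order), $a^{o_a}-1$ is a nonzero element of $O_K$, so $(a^{o_a}-1)$ is a genuine nonzero ideal, and the divisibility statement gives $\mathbf{N}I \mid \mathbf{N}(a^{o_a}-1)$ as positive integers. In particular $\mathbf{N}I \leq \mathbf{N}(a^{o_a}-1) = |N_{K/\mathbb{Q}}(a^{o_a}-1)|$.

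Next, I would plug in Lemma \ref{fieldnormbound} with $k = o_a$, giving $|N_{K/\mathbb{Q}}(a^{o_a}-1)| \leq C^{o_a}$ for the constant $C > 1$ produced there. Combining these bounds yields $\mathbf{N}I \leq C^{o_a}$, and taking logarithms to base $C$ finishes the proof: $o_a \geq \log_C \mathbf{N}I$.

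There is essentially no obstacle here beyond being careful that $a^{o_a}-1 \neq 0$ (which uses the infinite-order assumption) and keeping straight the identification between the ideal norm of a principal ideal and the absolute value of the field norm of a generator. The content of the lemma is entirely carried by Lemma \ref{fieldnormbound}; this lemma is the translation from ``divisibility'' to ``size'' that the later proof of the number-field analogue of the main theorem will need.
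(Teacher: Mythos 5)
Your proof is correct and is essentially identical to the paper's: both reduce the claim to $\mathbf{N}I \leq |N_{K/\mathbb{Q}}(a^{o_a}-1)| \leq C^{o_a}$ via Lemma \ref{fieldnormbound} and then take logarithms. Your added remark that $a^{o_a}-1 \neq 0$ (so the principal ideal is nonzero) is a small point of care the paper leaves implicit.
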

\begin{proof} Assume that $I|(a^{k}-1)$. So $\textbf{N}I \leq \textbf{N}(a^k-1) = |N_{K/\mathbb{Q}}(a^k-1)|$ since $(a^k-1)$ is a principal ideal. By Lemma \ref{fieldnormbound}, there is a constant $C>1$ depending only on $a$ such that $|N_{K/\mathbb{Q}}(a^k-1)| \leq C^k$ from which the result follows.

\end{proof}

\begin{corollary} Assume that the group of units of $O_K$ has positive rank. Then there is a constant $c>1$ such that
$[U_K:U_K(I)] \geq \log_{c} \textbf{N}I$.

\end{corollary}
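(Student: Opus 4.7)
The plan is to exploit the hypothesis that $U_K$ has positive rank to produce a unit $a \in O_K^\times$ of infinite order, and then invoke Lemma \ref{idealcountlemma} to bound the order of $a$ modulo $I$ from below. The key point is to convert a lower bound on the order of $a$ in $(O_K/I)^\times$ into a lower bound on $[U_K:U_K(I)]$.

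First I would fix a unit $a$ of infinite order and apply Lemma \ref{idealcountlemma} to obtain a constant $C>1$, depending only on $K$ and $a$, such that $o_a(I)\geq \log_C \mathbf{N}I$ for every ideal $I$ of $O_K$. Next I would examine the natural homomorphism $\langle a\rangle \hookrightarrow U_K \twoheadrightarrow U_K/U_K(I)$. Its kernel is $\langle a\rangle \cap U_K(I)$, which by the definition of $o_a(I)$ is precisely $\langle a^{o_a(I)}\rangle$. Since $a$ has infinite order, $\langle a\rangle\cong \mathbb{Z}$ and so the image of $\langle a\rangle$ in $U_K/U_K(I)$ is a cyclic subgroup of order exactly $o_a(I)$.

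Now $U_K/U_K(I)$ embeds in $(O_K/I)^\times$, so it is a finite group. Lagrange's theorem then gives $o_a(I) \mid [U_K:U_K(I)]$, and in particular $[U_K:U_K(I)]\geq o_a(I)\geq \log_C \mathbf{N}I$. Taking $c=C$ completes the argument.

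There is essentially no obstacle: the only place one needs to be careful is in identifying the kernel of $\langle a\rangle \to U_K/U_K(I)$, where the assumption that $a$ has infinite order is used so that distinct powers of $a$ remain distinct in $U_K$ and the cyclic subgroup has order equal to the order of $a$ modulo $I$ rather than a proper divisor.
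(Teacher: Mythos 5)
Your proof is correct and follows essentially the same route as the paper: the paper's one-line argument is precisely that $u^{[U_K:U_K(I)]}\in U_K(I)$ for a unit $u$ of infinite order (i.e.\ $I\mid (u^{[U_K:U_K(I)]}-1)$, your Lagrange step), after which Lemma \ref{idealcountlemma} gives the bound. Your write-up just makes the divisibility $o_a(I)\mid [U_K:U_K(I)]$ and the finiteness of $U_K/U_K(I)$ explicit.
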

\begin{proof} Apply Lemma \ref{idealcountlemma}, and note that if $u$ is a unit of infinite order, then $I|(u^{[U_K:U_K(I)]}-1)$.
\end{proof}

\begin{lemma} There exists a constant $k$ such that for all but finitely many ideals $I$, we have
$\omega(I) < k\frac{\log \textbf{N}I}{\log \log \textbf{N}I}$. Moreover, if we let $\tau(I)$ be the number of distinct ideal divisors of $I$ then
$\tau(I) = O((\textbf{N}I)^\epsilon)$ for any $\epsilon >0$.
\label{omegabound}
\end{lemma}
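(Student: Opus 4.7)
The plan is to generalize the classical proofs over $\mathbb{Z}$ that $\omega(n) = O(\log n / \log \log n)$ and $\tau(n) = O(n^\epsilon)$, using the extension degree $d = [K:\mathbb{Q}]$ to control how prime ideals distribute above rational primes. Write $I = \prod_{i=1}^{r} P_i^{e_i}$ where $r = \omega(I)$ and the $P_i$ are distinct prime ideals, so that $\textbf{N}I = \prod_i (\textbf{N}P_i)^{e_i} \geq \prod_i \textbf{N}P_i$. To get a useful lower bound I minimize this product over choices of $r$ distinct prime ideals of $O_K$.

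For the $\omega$ bound, the key observation is that at most $d$ distinct prime ideals of $O_K$ can lie over any given rational prime. Consequently, if we list the prime ideals of $O_K$ in nondecreasing order of norm, the $j$-th one has norm at least $p_{\lceil j/d \rceil}$, where $p_k$ denotes the $k$-th rational prime. So for any $I$ with $\omega(I) = r$,
\begin{equation*}
\log \textbf{N}I \;\geq\; \sum_{j=1}^{r} \log \textbf{N}P_{(j)} \;\geq\; \sum_{j=1}^{r} \log p_{\lceil j/d \rceil} \;\geq\; d \sum_{k=1}^{\lfloor r/d \rfloor} \log p_k.
\end{equation*}
By Chebyshev's estimate, $\sum_{k \leq m} \log p_k \gg m \log m$, so $\log \textbf{N}I \gg r \log(r/d)$. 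Inverting this inequality yields $r \leq k \log \textbf{N}I / \log \log \textbf{N}I$ for all sufficiently large $\textbf{N}I$ and a suitable constant $k$ depending only on $d$, which handles the cofinitely many ideals and leaves at most finitely many exceptions as allowed.

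For the divisor bound, I would use the standard multiplicative trick: since $\tau(I) = \prod_i (e_i + 1)$ and $\textbf{N}I^\epsilon = \prod_i \textbf{N}P_i^{\epsilon e_i}$, it suffices to show that
\begin{equation*}
\frac{\tau(I)}{\textbf{N}I^\epsilon} \;=\; \prod_{i} \frac{e_i + 1}{\textbf{N}P_i^{\epsilon e_i}}
\end{equation*}
is bounded uniformly in $I$. For prime ideals $P$ with $\textbf{N}P > 2^{1/\epsilon}$, the factor $(e+1)/\textbf{N}P^{\epsilon e} \leq (e+1)/2^e \leq 1$ for all $e \geq 1$, so such $P$ contribute at most $1$. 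There are only finitely many prime ideals with $\textbf{N}P \leq 2^{1/\epsilon}$, and for each such $P$ the function $e \mapsto (e+1)/\textbf{N}P^{\epsilon e}$ attains a finite maximum, so the entire product is bounded by a constant depending only on $K$ and $\epsilon$.

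I don't expect serious obstacles: the proof is essentially bookkeeping to track the factor of $d$ in the prime-counting step, with Chebyshev's bound doing the real work for the first assertion and the convergence of $(e+1)/c^e$ doing the real work for the second. The only mild subtlety is arguing carefully that the bound $d \pi(y)$ on the number of prime ideals of norm $\leq y$ really does force the ordered norms to grow like rational primes indexed by $j/d$, so this is the step I would write out most carefully.
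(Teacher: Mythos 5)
Your proof is correct, but it takes a genuinely different route from the paper's. For the $\omega$ bound, the paper does not redo the Chebyshev-style minimization at all: it passes to the radical $I_0$ of $I$, observes that each rational prime has at most $d$ prime ideals above it, and thereby bounds $\omega(I_0)$ by $d\cdot\omega(\textbf{N}I_0)$, quoting the classical inequality $\omega(n)<2\log n/\log\log n$ (valid outside a finite exceptional set) as a black box applied to $n=\textbf{N}I_0$. Your argument instead rebuilds the classical proof inside $O_K$, ordering prime ideals by norm and showing the $j$-th has norm at least $p_{\lceil j/d\rceil}$; this is self-contained (modulo Chebyshev) and makes the dependence on $d$ explicit, at the cost of the careful bookkeeping you flag at the end — which you do handle correctly, since among the first $j$ prime ideals at least $\lceil j/d\rceil$ distinct rational primes occur below them. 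For the $\tau$ bound the divergence is similar: the paper again reduces to the rational case by multiplicativity, counting divisors of an ideal of prime-power norm $p^a$ via a stars-and-bars bound $(a+d+1)!/(d!(a+1)!)$, which is polynomial in $a+1=\tau(p^a)$, so that $\tau(I)$ is polynomially controlled by $\tau(\textbf{N}I)=O(\textbf{N}I^{\epsilon})$; your local-factor argument bounding $\prod_i (e_i+1)/\textbf{N}P_i^{\epsilon e_i}$ is the standard Wigert-type proof transplanted directly to ideals, and is if anything cleaner than the paper's somewhat telegraphic reduction. Both approaches are sound; the paper's buys brevity by leaning on the rational-integer results, while yours buys a uniform, self-contained treatment.
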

\begin{proof} Recall that for any $\epsilon>0$ and for all but finitely many $n$ we have $\omega(n) < \frac{(1+\epsilon)\log n}{\log \log n}$. As before, we will take $\epsilon=1$, and so for all but finitely many $n$, $\omega(n) < \frac{2\log n}{\log \log n}$. Let $N$ be the set of positive integers violating the prior inequality, and let $M$ be the set of ideals with norm in $N$ or with norm less than or equal to $e^e$. Note that since $N$ is finite, so is $M$. Let $C_M$ be the maximum number of distinct prime divisors of any element of $M$.

Now, for any given ideal $I$, set $I_0=rad(I)$ to the largest squarefree ideal divisor of $I$.

So we need to just estimate $\omega(I_0)$  If  $I_0 \in M$, then $\omega(I_0) \leq C_M$. If $I_0 \notin M$, then since any prime in $\textbf{Z}$ factors into at most $d$ distinct prime ideals in $K$, we have
$$\omega(I_0) \leq  \frac{2d\log \textbf{N}I_0}{\log \log \textbf{N}I_0} \leq \frac{2d \log  \textbf{N}I}{\log \log \textbf{N}I}.$$ The last inequality follows from the fact that for $x > e^e$, $\frac{\log x}{\log \log x}$ is an increasing function.

To prove the result for $\tau(I)$, note that $\tau(n)=O(n^\epsilon)$ and that it suffices to prove that there is a constant $m$, such that  $$\tau(I)= o(\textbf{N}(I)),$$ which we need to only prove for ideals whose norm is a power a prime since $\tau(I)$ is a multiplicative function.  Assume that $\textbf{N}(I)=p^a$, so that $\tau(\textbf{N}(I))=a+1$. Then the number of divisors of $I$ is bounded by the number of solutions in nonnegative integers to the inequality $$x_1 + x_2 + \cdots + x_d  \leq a+1$$ which is  $(a+d+1)!/(d!(a+1)!)$. Since $d=[K:Q]$ is fixed the result follows.
\end{proof}

\begin{theorem} For any $\alpha <3$ we have $P_K(x) = O(\frac{x^2}{(\log x)^\alpha})$.
\end{theorem}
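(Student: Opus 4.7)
The plan is to transplant the proof of the first theorem into the number field setting, using the lemmas of Section 3 as drop-in replacements for their $\mathbb{Z}$-counterparts. By hypothesis $O_K$ has positive unit rank, so fix a unit $a \in O_K^\times$ of infinite order. The corollary following Lemma \ref{idealcountlemma} gives $[U_K:U_K(I)] \geq o_a(I)$, and since $\phi(I) \leq \textbf{N}I$ it suffices to bound $\sum_{\textbf{N}I \leq x} \textbf{N}I/o_a(I)$.

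I would split this sum into three pieces exactly as in the rational case. Let $C$ be the constant from Lemma \ref{fieldnormbound}, set $\ell(x) = 3\log_C x$ and $t = (\log x)^\alpha$, and write $\mathrm{I}(x)$, $\mathrm{II}(x)$, $\mathrm{III}(x)$ for the contributions from ideals with $o_a(I)$ in $[1,\ell(x)]$, $(\ell(x),t]$, and $(t,\infty)$ respectively. For $\mathrm{III}(x)$, the trivial bound $\textbf{N}I/o_a(I) \leq x/t$ combined with the $O(x)$ ideals of norm at most $x$ gives $\mathrm{III}(x) = O(x^2/t)$. For $\mathrm{I}(x)$, any contributing ideal divides $(a^r-1)$ for some $r \leq \ell(x)$, and Lemma \ref{fieldnormbound} yields $\textbf{N}(a^r-1) \leq C^r \leq x^3$; by the second half of Lemma \ref{omegabound} each such principal ideal has $O(x^{3\epsilon})$ ideal divisors, so summing over $r$ and using $\textbf{N}I \leq x$ per term gives $\mathrm{I}(x) = O(x^{1+\epsilon'})$ for any $\epsilon' > 0$.

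The heart of the argument is the estimate for $\mathrm{II}(x)$. Set $\beta = (\alpha-1)/2$ and split $\mathrm{II}(x) = \mathrm{II}_1(x) + \mathrm{II}_2(x)$ according to whether $\omega(I) \leq (\log x)^\beta$ or not. For $\mathrm{II}_2(x)$, Lemma \ref{omegacountbound2} bounds the number of contributing ideals by $O(x(\log\log x)^2/(\log x)^{2\beta})$, and since each term is at most $x/\ell(x)$ we get $\mathrm{II}_2(x) = O(x^2(\log\log x)^2/(\log x)^{2\beta+1}) = O(x^2(\log\log x)^2/(\log x)^\alpha)$, using $2\beta+1=\alpha$. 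For $\mathrm{II}_1(x)$ I imitate the combinatorial count from Section 2: each contributing ideal divides some $(a^r-1)$ with $r \leq t$, and by Lemma \ref{omegabound} applied to $(a^r-1)$ the number of distinct prime ideal divisors is at most $k\log(C^r)/\log\log(C^r) = O(r/\log r)$, which is at most $t$ for $x$ large. Choosing $j \leq (\log x)^\beta$ of these primes and assigning each an exponent at most $\log_2 x$, Lemma \ref{HRlemma} with parameters $k=t$ and $m=(\log x)^\beta$ yields that the number of such ideals is $O(x^{\epsilon_0})$ for any $\epsilon_0 > 0$, whence $\mathrm{II}_1(x) = O(x^{1+\epsilon_0})$.

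The main obstacle is purely bookkeeping: verifying that the combinatorial skeleton of the rational proof survives when ``prime factor'' is reinterpreted as ``prime ideal factor'' and $|a^r-1|$ is replaced by $\textbf{N}(a^r-1)$. This hinges on Lemmas \ref{omegabound} and \ref{fieldnormbound}, which supply the exact ideal-theoretic analogs of the two inputs $\omega(n) = O(\log n/\log\log n)$ and $|a^r-1| \leq a^r$ used in Section 2. Once these are substituted in, combining the bounds on $\mathrm{I}(x)$, $\mathrm{II}_1(x)$, $\mathrm{II}_2(x)$, and $\mathrm{III}(x)$ shows that $\mathrm{II}_2(x)$ dominates, giving $P_K(x) = O(x^2(\log\log x)^2/(\log x)^\alpha)$; absorbing the $(\log\log x)^2$ by passing to any slightly smaller exponent then proves the theorem for every $\alpha < 3$.
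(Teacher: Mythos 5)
Your proposal follows essentially the same route as the paper: the same reduction to $\sum_{\textbf{N}I \leq x} \textbf{N}I/\mathrm{ord}(I)$, the same three-way split at $\ell(x)=3\log_C x$ and $t=(\log x)^\alpha$, the same further split of $\mathrm{II}(x)$ at $\omega(I) \leq (\log x)^\beta$ with $\beta=(\alpha-1)/2$ using Lemmas \ref{omegacountbound2} and \ref{HRlemma}, and the same dominant term $\mathrm{II}_2(x)$. The only cosmetic differences are your slightly cleaner $O(x^{1+\epsilon'})$ bound for $\mathrm{I}(x)$ via the divisor estimate of Lemma \ref{omegabound} (the paper gets $O(x^{3/2}\log x)$) and your leaving implicit the finite exceptional set of ideals where the $\omega$ bound fails, which contributes only $O(1)$.
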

\begin{proof} The proof is similar to our earlier estimate for $G(x)$.

We will show then the equivalent result that for any fixed $\alpha$ such that $1< \alpha < 3$, we have $P_K(x) =O(\frac{x^2 (\log \log x)^2}{(\log x)^\alpha})$.

 We will write $\mathrm{ord}(I)$ to be the smallest positive integer $o$ such that $a^o-1$ is in $I$. We note that $$P_K(x) \leq \sum_{\textbf{N}I \leq x} \frac{\textbf{N}I}{\mathrm{ord}(I)}$$ Let $c$ be the constant from Lemma \ref{idealcountlemma}. And let $\ell(x)=3\log_C x$, and set $t=(\log x)^\alpha$.

 We set $T(x,p,q)$ to be those $I$ with norm at most $x$ and satisfying $p \leq \mathrm{ord}(I) < q$

 $$P_K(x) \leq \mathrm{I}(x) + \mathrm{II}(x) + \mathrm{III}(x) $$ where we have sums defined in an analogous fashion as earlier. That is,
 \begin{equation}\label{Idef2} \mathrm{I}(x)= \sum_{I \in T(x,1,\ell(x))} \frac{{\bf{N}}I}{{\mathrm{ord}}(I)},\end{equation} \begin{equation}\label{IIdef2} \mathrm{II}(x)= \sum_{I \in T(x,\ell(x),t)} \frac{{\bf{N}}I}{{\mathrm{ord}}(I)}\end{equation} and \begin{equation}\label{IIIdef2} \mathrm{III}(x)= \sum_{I \in T(x,t, \infty) } \frac{{\bf{N}}I}{{\mathrm{ord}}(I)}.\end{equation}

We will first estimate $\mathrm{I}(x)$. The number of terms in $\mathrm{I}(x)$ is bounded by $\sum_{1 \leq r \leq \ell(x)} \tau((a^r-1))$. This sum is at most
$\ell(x)\tau_m(x)$ where $\tau_m(x)$ is the maximum of $\tau((a^r-1))$ over $r$ satisfying $1 \leq r \leq \ell(x)$. By Lemma \ref{omegabound} we have $\tau_m(I) = O(x^{1/2})$. Any term in $\mathrm{I}(x)$ is bounded above by $x$ and so conclude that $$I(x)= O(x^{1/2}3(\log_c x) x)= O(x^{3/2}\log x).$$

Next, we need to estimate $\mathrm{III}(x)$. $\mathrm{III}(x)$ has at most $O(x)$ terms (since the number of ideals of norm at most $x$ is $O(x)$) and each term is at most $\frac{x}{t}$ and so $$\mathrm{III}(x) =O\left(\frac{x^2}{t}\right)=O\left(\frac{x^2}{(\log x)^\alpha}\right).$$

To estimate $\mathrm{II}(x)$ we will break it into two sets of terms depending on how many distinct prime factors $I$ has, along with an $O(1)$ as before.

Let $k$ be a constant that satisfies Lemma \ref{omegabound}. We define $N_k$ to be the set of ideals which divide some $J$ with $\omega(J) \geq \frac{C\log \textbf{N}j}{\log \log \textbf{N}}$. Note that  $N_K$ is finite.

As before, fix $\beta=  \frac{\alpha -1}{2}$. Set $H(x)$ to be the subset
of elements of $T(x,\ell(x),t)$ with are not in $N_K$ and which have at most $(\log x)^\beta$ distinct prime factors and set  \begin{equation}\label{II1def2} \mathrm{II}_1(x) = \sum_{I \in H(x) } \frac{{\bf{N}}I}{\mathrm{ord}(I)}. \end{equation} Similarly,
let $J(x)$ to be the set of elements of $T(x,\ell(x),t)$ with more than $(\log x)^\beta$ distinct prime factors, and not in $N_K$ and set \begin{equation}\label{II2def2} \mathrm{II}_2(x) = \sum_{I \in J(x)} \frac{{\bf{N}}I}{\mathrm{ord}(I)}.\end{equation}

To estimate $\mathrm{II}_1(x)$ we will estimate $|H(x)|$. If $I \in H(x)$ then $I|((u^r-1))$ for some $r$ with $\ell(x) \leq r \leq t$
and $((a^r-1))$ not in $N_K$. Then by Lemma \ref{fieldnormbound} and Lemma \ref{omegabound}we have for some constant $M$, $$\omega((a^r-1)) < \frac{Mr}{\log r} < t.$$ The last inequality is valid for $t$ sufficiently large, which occurs when $x$ is sufficiently large.
However, each element of $|H(x)|$ has $j$ distinct prime divisors where $1 \leq j \leq (\log x)^\beta$. So for any fixed $r$, there are at most $\sum_{j=1}^m {k \choose j}$ possible choices for the distinct prime divisors where $k$ is the largest integer less than or equal to $t$
and $m$ is the largest integer less than or equal to $(\log x)^\beta$. Any prime can
be raised to at most the $\log_2 x$ power, so the total number of divisors is bounded by  $(\sum_{j=1}^m {k \choose j})(\log_2 x)^{(\log x)^\beta}$. There are at most $t$ possible values of $r$. Thus,
$$|H(x)| \leq \sum_{j=1}^m {k \choose j}t(\log_2 x)^{(\log x)^\beta}.$$   Applying Lemma \ref{HRlemma} and using the values of $k$ and $m$ then gives $$ |H(x)|\leq t\left(\frac{e^2 t }{(\log x)^\beta }\right)^{(\log x)^\beta} (\log_2 x)^{(\log x)^\beta}.$$

Now, since $t= (\log x)^\beta$, $$\log |H(x)| \leq C(\log x)^\beta(\log \log x) + \log t + (\log \log_2 x)(\log x)^\beta  $$ for some constant $C$ and this is $O((\log x)^\beta \log \log x)$. Thus $|H(x)| = O(x^{\epsilon_0})$, and so $\mathrm{II}_1(x) = O(x^{1+\epsilon_0})$ for any $\epsilon_0>0$.

To estimate  $\mathrm{II}_2(x)$, apply Lemma \ref{omegacountbound2} so that the number of terms of $J$ is $O(\frac{x (\log \log x)^2}{(\log x)^{2\beta}})$ and thus conclude that $$\mathrm{I}_2(x) = O\left(\frac{x^2 (\log \log x)^2}{(\log x)^{2\beta +1}}\right)$$ since every
term is at most $O(\frac{x}{\log_{c} x})$.

\end{proof}

\end{document}